\def\arXiv#1{arXiv:\href{http://arXiv.org/abs/#1}{#1}}
\newtheorem{theorem}{Theorem}
\newtheorem{lemma}[theorem]{Lemma}
\newtheorem{claim}[theorem]{Claim}
\theoremstyle{definition}
\numberwithin{figure}{section}
\numberwithin{equation}{section}
\numberwithin{table}{section}
\newcommand{\R}{\mathbb{R}}
\newcommand{\C}{\mathbb{C}}
\newcommand{\E}{\mathbb{E}}
\newcommand{\prob}{\mathbb{P}}
\newcommand{\var}{ \text{var} }
\newcommand{\vol}{\operatorname{vol}}
\DeclareMathOperator{\tr}{tr}
\title{Shrinking scale equidistribution for monochromatic random waves on compact manifolds}
\author{Matthew de Courcy-Ireland}
\address{Department of Mathematics\\
Princeton University\\
Princeton NJ 08544} \email{mdc4@math.princeton.edu}
\date{February 14, 2019}
\begin{document}

\begin{abstract}
We prove equidistribution at shrinking scales for the monochromatic ensemble on a compact Riemannian manifold of any dimension. This ensemble on an arbitrary manifold takes a slowly growing spectral window 
in order to synthesize a random function.
With high probability, equidistribution takes place close to the optimal wave scale and simultaneously over the whole manifold.
The proof uses Weyl's law to approximate the two-point correlation function of the ensemble, and a Chernoff bound to deduce concentration.
\end{abstract}

\maketitle

\section{Introduction}

Consider a compact manifold $M$ together with a Riemannian metric $g$. By compactness, the spectrum of the Laplacian is a discrete sequence of eigenvalues $0 = t_0^2 \leq t_1^2 \leq t_2^2 \leq \ldots \rightarrow \infty$, possibly with multiplicity. The corresponding eigenfunctions $\phi_j : M \rightarrow \R$ satisfy
\begin{equation}
\Delta \phi_j + t_j^2 \phi_j = 0.
\end{equation}
These eigenfunctions form an orthonormal basis for $L^2(M)$, the $L^2$ space with respect to integration against the volume form of $g$. Thus one can expand functions in terms of the Laplace eigenfunctions, and a natural model for a random function on $M$ is to randomize the coefficients in such an expansion. The \emph{monochromatic ensemble} takes the specific form
\begin{equation} \label{eq:mono}
\phi(x) = \sum_{T -\eta(T) \leq t_j < T} c_j \phi_j(x)
\end{equation}
where the coefficients $c_j$ are independent, identically distributed Gaussian random variables of mean 0. The parameter $T$ is large. If the window $\eta(T)$ is short compared to $T$, then $\phi(x)$ is a stand-in for a ``random eigenfunction" with eigenvalue $T^2$. The problem with literally taking a random eigenfunction is that when an eigenvalue has multiplicity 1, the random function would simply be a deterministic function multiplied by a random scalar. 

Consider a ball $B = B_r(z)$ with center $z \in M$ whose radius $r > 0$ is allowed to vary with $T$. We can normalize so that $\int_B \phi^2$, in expectation, is close to $\vol(B)$. 
\begin{theorem} \label{thm:main}
If $rT/\log(T) \rightarrow \infty$ (or in case $\dim{M}=2, \ rT/\log(T)^2 \rightarrow \infty$) and the spectral window obeys $\eta(T)/\log(T) \rightarrow \infty$ and $\eta(T) \lesssim T^{1/2}$, then for any $\varepsilon > 0$,
\[
\prob \left\{ \sup_z \left| \frac{1}{\vol(B_r(z))} \int_{B_r(z)} | \phi |^2 - \E\left[\frac{1}{\vol(B_r(z))} \int_{B_r(z)} | \phi |^2 \right] \right| \geq \varepsilon \right\} \rightarrow 0.
\]
\end{theorem}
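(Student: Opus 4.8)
The plan is to use that $\phi$ is a centered Gaussian field whose covariance is $\sigma^2$ times the spectral projector onto the window, to rewrite $\int_{B_r(z)}|\phi|^2$ as a quadratic form in the Gaussian coefficients, to control its fluctuations about the mean by a Chernoff bound whose efficiency is dictated by Weyl-law estimates for that projector, and finally to pass to a uniform statement in $z$ by a net argument.

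\emph{Reduction to a quadratic form.} Let $c=(c_j)$ be the vector of coefficients, $c_j\sim N(0,\sigma^2)$ i.i.d., and let $K_0(x,y)=\sum_{T-\eta\le t_j<T}\phi_j(x)\phi_j(y)$ be the kernel of the spectral projector $\Pi_\eta$ onto the window, so $\E[\phi(x)\phi(y)]=\sigma^2K_0(x,y)$. Then $\int_{B_r(z)}|\phi|^2=\sum_{j,k}c_jc_k\int_{B_r(z)}\phi_j\phi_k=c^{\mathsf T}A(z)\,c$ with $A(z)_{jk}=\int_{B_r(z)}\phi_j\phi_k$; thus $A(z)\succeq 0$ is the matrix of $\Pi_\eta\mathbbm{1}_{B_r(z)}\Pi_\eta$ in the basis $(\phi_j)$. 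Diagonalizing $A(z)$ makes $\sigma^{-2}\int_{B_r(z)}|\phi|^2=\sum_k\lambda_k(z)g_k^2$ a nonnegative combination of independent $\chi^2_1$ variables with weights the eigenvalues of $A(z)$, and $\E\int_{B_r(z)}|\phi|^2=\sigma^2\tr A(z)=\sigma^2\int_{B_r(z)}K_0(x,x)\,dx$.

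\emph{Weyl-law inputs and norm estimates.} The pointwise Weyl law $\sum_{t_j\le\lambda}\phi_j(x)^2=c_d\lambda^d+O(\lambda^{d-1})$ (uniformly in $x$), differenced at $\lambda=T$ and $\lambda=T-\eta$ and using $\eta\to\infty$, $\eta\lesssim T^{1/2}$, gives $K_0(x,x)=c_d\,d\,T^{d-1}\eta\,(1+o(1))$ uniformly; choosing $\sigma^2=(c_d\,d\,T^{d-1}\eta)^{-1}$ we get $\E[\vol(B_r(z))^{-1}\int_{B_r(z)}|\phi|^2]=1+o(1)$ uniformly, so it suffices to prove concentration about $1$. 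Weyl's law also furnishes the near-diagonal bound $|K_0(x,y)|\lesssim T^{d-1}\eta\,(1+T|x-y|)^{-(d-1)/2}$ for $|x-y|\lesssim 1/\eta$ (the ``two-point correlation'' approximation from the abstract: a stationary-phase analysis of the spectral annulus $(T-\eta,T)$, valid well inside the injectivity radius). Using $\Pi_\eta^2=\Pi_\eta$,
\[
\|A(z)\|_{\mathrm{HS}}^2=\int_{B_r(z)}\!\!\int_{B_r(z)}K_0(x,y)^2\,dx\,dy\le\int_{B_r(z)}K_0(x,x)\,dx\lesssim\vol(B_r(z))\,T^{d-1}\eta ,
\]
and, when $r\lesssim 1/\eta$ so that every pair $x,y\in B_r(z)$ lies in the near-diagonal regime, the sharper $\|A(z)\|_{\mathrm{HS}}^2\lesssim\vol(B_r(z))\cdot(T^{d-1}\eta)^2\!\int_{|w|\lesssim r}(1+T|w|)^{-(d-1)}dw\lesssim\vol(B_r(z))\,T^{d-1}\eta\cdot\eta r$. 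Combining, $\|A(z)\|_{\mathrm{HS}}^2\lesssim\vol(B_r(z))\,T^{d-1}\eta\cdot\min(1,\eta r)$, and for the operator norm I will simply use $\|A(z)\|\le\min\!\bigl(1,\|A(z)\|_{\mathrm{HS}}\bigr)$, which is what loses a square root in dimension two.

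\emph{Chernoff bound and uniformity in $z$.} For a positive combination of independent $\chi^2_1$'s, optimizing the moment generating function gives, with $V=\vol(B_r(z))$,
\[
\prob\!\left\{\Bigl|\tfrac1V\!\int_{B_r(z)}|\phi|^2-\E\tfrac1V\!\int_{B_r(z)}|\phi|^2\Bigr|\ge\varepsilon\right\}\le 2\exp\!\left(-c\min\!\Bigl(\tfrac{\varepsilon^2V^2}{\sigma^4\|A(z)\|_{\mathrm{HS}}^2},\tfrac{\varepsilon V}{\sigma^2\|A(z)\|}\Bigr)\right).
\]
Feeding in $\sigma^2\asymp(T^{d-1}\eta)^{-1}$, $V\asymp r^d$ and the bounds above, the two arguments of the minimum are $\gtrsim_\varepsilon(rT)^{d-1}\max(1,\eta r)$ and $\gtrsim_\varepsilon(rT)^{(d-1)/2}\sqrt{\max(1,\eta r)}$, so the right side is $\le 2\exp(-c_\varepsilon(rT)^{(d-1)/2})$ for $T$ large. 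Now take a $\delta$-net $\mathcal N\subset M$ with $\delta\asymp r\varepsilon/\log T$, so $\#\mathcal N\asymp(\log T/(r\varepsilon))^d$ and $\log\#\mathcal N=O_d(\log T)$ since $r\ge 1/T$ for $T$ large. For $d(z,z')<\delta$ the symmetric difference $B_r(z)\triangle B_r(z')$ has volume $\lesssim r^{d-1}\delta$, whence $|\tfrac1{\vol B_r(z)}\int_{B_r(z)}|\phi|^2-\tfrac1{\vol B_r(z')}\int_{B_r(z')}|\phi|^2|\lesssim\tfrac{\delta}{r}(\|\phi\|_{L^\infty}^2+\tfrac1{\vol B_r(z')}\int_{B_r(z')}|\phi|^2)$, and likewise $|\E X(z)-\E X(z')|=o(1)$. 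Since $\phi$ is band-limited, $\|\phi\|_{L^\infty(M)}$ is comparable to its maximum over a $c/T$-net, so a union bound using $\E\phi(x)^2=1+o(1)$ gives $\prob\{\|\phi\|_{L^\infty(M)}>C\sqrt{\log T}\}\to 0$ for $C$ large. On that event and on $\{\sup_{z'\in\mathcal N}|X(z')-\E X(z')|\le\varepsilon\}$ the oscillation above is $\lesssim\varepsilon\log T\cdot\delta/r\lesssim\varepsilon$, so $\sup_z|X(z)-\E X(z)|\lesssim\varepsilon$; and the union bound over $\mathcal N$ gives
\[
\prob\{\sup_{z'\in\mathcal N}|X(z')-\E X(z')|>\varepsilon\}\le 2\exp\!\bigl(O_d(\log T)-c_\varepsilon(rT)^{(d-1)/2}\bigr)\to 0
\]
exactly when $(rT)^{(d-1)/2}/\log T\to\infty$, i.e. $rT/\log T\to\infty$ for $d\ge 3$ and $rT/\log^2T\to\infty$ for $d=2$. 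Rescaling $\varepsilon$ by the accumulated constants finishes the proof.

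\emph{Main obstacle.} The crux is the Hilbert--Schmidt estimate with the factor $\min(1,\eta r)$: equivalently, that the number of effective degrees of freedom of $\phi$ inside $B_r(z)$ is $\gtrsim(rT)^{d-1}$ even for a very thin window. This rests on the off-diagonal decay $|K_0(x,y)|\lesssim T^{d-1}\eta\,(1+T|x-y|)^{-(d-1)/2}$ of the windowed spectral projector, uniform in $z$ and in the near-diagonal range $|x-y|\lesssim 1/\eta$ (which is all that is needed, since either $r\lesssim 1/\eta$ and this range covers $B_r(z)\times B_r(z)$, or $r\gtrsim 1/\eta$ and the cruder $\|A(z)\|_{\mathrm{HS}}^2\le\int_{B_r(z)}K_0(x,x)$ already suffices). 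This is precisely where Weyl's law and stationary phase on the spectral annulus enter; the rest — the Chernoff inequality, the band-limited $L^\infty$ control, and the net argument — is routine, and the extra logarithm in dimension two is merely an artifact of bounding $\|A(z)\|$ by $\|A(z)\|_{\mathrm{HS}}$ in place of the sharp semiclassical bound $\|A(z)\|\lesssim\min(1,\eta r)$.
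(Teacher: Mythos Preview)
Your approach is essentially the paper's: write $X_z$ as a Gaussian quadratic form, bound its tail by a Chernoff/Hanson--Wright inequality governed by $\|A(z)\|_{\mathrm{HS}}$, control $\|A(z)\|_{\mathrm{HS}}$ via off-diagonal Weyl-law estimates on the windowed kernel, and pass to a supremum by a net argument together with $L^\infty$ control of $\phi$. The packaging differs in minor ways (you use the two-branch Bernstein form and a $\delta\asymp r\varepsilon/\log T$ net; the paper takes $s=c(\sum\lambda_j^2)^{-1/2}$ directly and a $1/T$-net), and your projector observation $\|A(z)\|_{\mathrm{HS}}^2\le\operatorname{tr}A(z)$ for the regime $r\gtrsim 1/\eta$ is a clean touch the paper does not use.

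There is, however, one genuine gap. The near-diagonal bound you invoke,
\[
|K_0(x,y)|\ \lesssim\ T^{d-1}\eta\,(1+T|x-y|)^{-(d-1)/2}\qquad(|x-y|\lesssim 1/\eta),
\]
is too optimistic on a general compact manifold. Subtracting the pointwise Weyl asymptotic at $T$ and $T-\eta$ leaves an error $O(T^{d-1})$ that does \emph{not} decay with $|x-y|$; the correct upper bound (the paper's Claim in Section~\ref{sec:semi}) is
\[
|K_0(x,y)|\ \lesssim\ T^{d-1}\eta\Bigl((1+T|x-y|)^{-(d-1)/2}+\eta^{-1}\Bigr).
\]
Your stated inequality only follows when $T|x-y|\lesssim\eta^{2/(d-1)}$, which for $\eta\lesssim T^{1/2}$ is strictly smaller than your claimed range $|x-y|\lesssim 1/\eta$ once $d\ge 3$. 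Plugging the corrected bound into your Hilbert--Schmidt integral yields, in the regime $r\lesssim 1/\eta$,
\[
\frac{\sigma^4}{V^2}\,\|A(z)\|_{\mathrm{HS}}^2\ \lesssim\ (rT)^{-(d-1)}+\eta^{-2},
\]
exactly the paper's Lemma~\ref{lem:variance-bound}, so the Chernoff exponent is $\gtrsim_\varepsilon\bigl((rT)^{-(d-1)/2}+\eta^{-1}\bigr)^{-1}$ rather than $(rT)^{(d-1)/2}$ alone. The union bound then needs $\eta/\log T\to\infty$ in addition to the $rT$ hypothesis. This is precisely the assumption $\eta(T)/\log T\to\infty$ in the theorem statement, which your write-up never uses; once you carry the Weyl remainder through, it reappears and your argument goes through verbatim.
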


The \emph{wave scale} $1/T$ is the natural wavelength of an eigenfunction with Laplace eigenvalue $T^2$, also called the \emph{Planck scale} or \emph{de Broglie wavelength}. At such a fine scale, there could be a large discrepancy between $\int_B | \phi^2 |$ and $\vol(B)$. For instance, $\int_B \phi^2$ may be much larger than $\vol(B)$ if $\phi$ achieves its maximum inside $B$. The hypothesis of Theorem~\ref{thm:main} is that $r$ is large compared to the wave scale in the sense that $rT/\log(T) \rightarrow \infty$. We then conclude there is only a small deviation even in the worst case over all centers $z$. The assumption is a relatively mild one, as it allows $rT/\log(T)$ to grow arbitrarily slowly so that Theorem~\ref{thm:main} takes place almost at the wave scale.

Theorem~\ref{thm:main} follows from a more explicit bound: for any $\varepsilon > 0$, there are positive $C_{\varepsilon}$ and $c(\varepsilon)$ such that the probability of an $\varepsilon$-deviation occurring somewhere on $M$ is at most
\begin{equation} \label{eqn:main-precise}
C_{\varepsilon} T^n \exp\left(-c(\varepsilon) \big( (rT)^{-(n-1)/2} + \eta^{-1}\big)^{-1} \right).
\end{equation}
The factor $T^n$ in (\ref{eqn:main-precise}) arises from taking a union bound over roughly $T^n$ points, separated pairwise by a distance $1/T$.
The exponential factor is an upper bound for the probability of a deviation at a single point.
Under the assumption that $\eta$ and $rT$ grow faster than logarithmically, the factor $T^n$ can be absorbed into the exponential and Theorem~\ref{thm:main} follows.
We describe the union bound in more detail in Section~\ref{sec:union}.
Section~\ref{sec:chernoff} uses a Chernoff bound to estimate the probability of a deviation at a single point. 
The result is expressed in terms of the variance of the local integrals $\int_B \phi^2$, which we estimate in Lemma~\ref{lem:variance-bound}. The key input is the Local Weyl Law for Laplace eigenfunctions, in a form proved by Canzani and Hanin \cite{CH} and described in Section~\ref{sec:semi}. This is used to estimate the two-point correlation function of $\phi$, defined in Section~\ref{sec:2point}. We complete the proof of (\ref{eqn:main-precise}) in Sections~\ref{sec:var-bound} and \ref{sec:collect}.
Section~\ref{sec:conc} concludes with some further questions and a lemma that applies if the coefficients in (\ref{eq:mono}) are not necessarily Gaussian.

To have a model for random eigenfunctions, the window $\eta$ should be as small as possible, so it is not a serious restriction to assume that $\eta \lesssim T^{1/2}$ in Theorem~\ref{thm:main}. This assumption is convenient for stating simplified estimates, but the arguments below could still be implemented as long as $\eta = o(T/\log{T})$.

We mainly have in mind real-valued functions $\phi_j  : M \rightarrow \R$, but we write absolute values in Theorem~\ref{thm:main} because a similar statement holds for complex-valued functions as well. However, the complex version is not as sharp since complex eigenfunctions may equidistribute at even smaller scales than their real counterparts. For instance, on the circle $M=S^1$, $e^{iTx}$ is uniform at all scales because its modulus is identically 1, whereas $\cos(Tx)$ is limited by the wave scale $1/T$. Nevertheless, the notation below will involve complex conjugates in order to include the complex case. It would also be appropriate to take Gaussians in the complex plane if one were interested in the complex case, instead of the real coefficients $c_j$. This can be incorporated into the same proof as for the real case, since a single complex Gaussian is equivalent to two independent real Gaussians.

To provide some context for Theorem~\ref{thm:main}, consider the property of \emph{quantum unique ergodicity} (QUE). By QUE for a Riemannian manifold $M$, we mean that for any fixed measurable subset $A$ of $M$,
\begin{equation} \label{eqn:equi}
\int_A |\phi_{\lambda}|^2 d\vol \rightarrow \vol(A)
\end{equation}
for any sequence of Laplace eigenfunctions $\phi_{\lambda}$ with growing eigenvalue $\lambda \rightarrow \infty$.
There is a further question of the distribution of the microlocal lifts of $|\phi|^2 d\vol$ to phase space $S^* M$, but we confine our attention to the base space $M$.
If (\ref{eqn:equi}) holds along a full subsequence of eigenfunctions, the manifold enjoys \emph{quantum ergodicity} but may lack uniqueness of quantum limits. The quantum ergodicity theorem proved by Shnirelman \cite{Sh1, Sh2}, Colin de Verdi\`{e}re \cite{CdV}, and Zelditch \cite{Z} shows that negative curvature implies quantum ergodicity. Rudnick and Sarnak conjecture that the stronger property of QUE is true on any compact negatively curved surface \cite{RS}. This has been shown for examples of arithmetic origin in work of Lindenstrauss \cite{L1,L2}, and Bourgain-Lindenstrauss \cite{BouLi}, Jakobson \cite{J}, Holowinsky \cite{H}, and Holowinsky-Soundararajan \cite{HS}. For a general metric, work of Anantharaman \cite{A}, Anantharaman-Nonnenmacher \cite{AN}, Anantharaman-Silberman \cite{AS}, and Dyatlov-Jin \cite{DJ} places constraints on the measures that arise as quantum limits but it remains unknown whether the uniform measure is the only possibility.

From this point of view, it is of interest to randomize and see whether one at least has uniform distribution with high probability. VanderKam \cite{VdK} showed that one does have equidistribution for random spherical harmonics on the sphere, where QUE is known to fail. A more refined question is whether there is equidistribution even if the test set $A$ shrinks as the frequency grows. This scenario has been studied recently in papers of Han \cite{Han} (assuming high multiplicity), Han-Tacy \cite{HT} (with a spectral window instead of high multiplicity), Granville-Wigman \cite{GW} (on an arithmetic torus guaranteeing high multiplicity), Lester-Rudnick \cite{LR} (on higher-dimensional tori), Humphries \cite{Hum} (for non-random functions on arithmetic surfaces, with the averaging being done over the sphere center instead). 
In particular, Theorem 4.4 from Han-Tacy \cite{HT} estimates the probability that there is some point with a given deviation, much like our Theorem~\ref{thm:main} but in a different context. In \cite{HT}, instead of fluctuating near 1, $\int_M \phi^2$ is conditioned to be exactly 1. This is more natural for the quantum interpretation, but the corresponding coefficients in (\ref{eq:mono}) are no longer independent random variables, and Han-Tacy treat this with an elegant application of L\'evy's concentration of measure in high-dimensional spheres. The radius in \cite{HT} is $r = T^{-p}$ with $p$ close to $1/2$, whereas we take $r$ equal to $T^{-1}$ up to a logarithmic power.
Thus Theorem~\ref{thm:main} is closer to the wave scale, but in the easier case of a fixed $\varepsilon > 0$ instead of the shrinking deviation from \cite{HT}.

\section{Two-point function} \label{sec:2point}

A fundamental quantity governing the statistics of random functions of the form (\ref{eq:mono}) is the \emph{two-point function} of the ensemble, given by
\begin{equation} \label{eq:kernel}
K(x,x') = \sum_{T-\eta < t_j \leq T} \phi_j(x) \overline{\phi_j(x')}.
\end{equation}
At each point, $\phi(x)$ is a Gaussian of mean zero, and it is $K(x,x')$ that records the correlation of these random variables at different points on the manifold. Indeed, suppose the coefficients $c_j$ in (\ref{eq:mono}) are independent with mean 0 and variance $\sigma^2 = \E[c_j^2]$. We then have
\begin{equation}
\E[\phi(x) \overline{\phi(x')} ] = \sum_j \sum_k \phi_j(x) \overline{ \phi_k(x')} \E[c_j c_k] = \sigma^2 K(x,x').
\end{equation}

A natural normalization is to require
\begin{equation}
\E\left[ \frac{1}{\vol(M)} \int_M |\phi|^2  \right] = 1.
\end{equation}
To arrange this, the variance of the coefficients must be
\begin{equation}
\sigma^2 = \frac{\vol(M)}{\int_M K(x,x) dx } = \frac{\vol(M)}{\sum \int_M | \phi_j |^2 }.
\end{equation}
The basis functions are orthonormal in $L^2(M)$, so the denominator is just the number of eigenvalues in the interval, say $N$:
\begin{equation}
\sum_j \int_M \phi_j^2 = \#\{j \ ; \ T -\eta(T) \leq t_j  \leq  T \} = N.
\end{equation}
Thus we choose the variance of the coefficients to be
\begin{equation}
\sigma^2 = \var[c] = \frac{\vol(M)}{N} \asymp N^{-1}.
\end{equation}
 For other sets $B \subseteq M$, we then have
\[
\E \left[ \int_B |\phi |^2 \right] =\sigma^2  \int_B K(x,x) dx = \vol(B) \frac{\int_B K(x,x) dx /\vol(B) }{\int_M K(x,x) dx/\vol(M)}
\]
In the homogeneous case, $K(x,x)$ is independent of $x$ and the expectation is simply $\vol(B)$. In general, it is never very far from $\vol(B)$, as we will see from Weyl's law:
\[
\sigma^2 \int_B K(x,x) dx = \vol(B) \sigma^2 \left(\frac{N}{\vol(M)} + O(T^{n-1}) \right) = \vol(B) \left(1+ O\big( \eta^{-1} \big) \right)
\]

\section{Outline of the proof: Union bound} \label{sec:union}

To prove Theorem \ref{thm:main}, we follow the strategy of \cite{dci}. We write the random variable of interest as
\begin{equation}
X_z = \frac{1}{\vol(B_r(z))} \int_{B_r(z)} | \phi |^2.
\end{equation}
It has expectation $\E[X_z] = 1 + O(\eta^{-1})$ of order 1.
The key point is that for a monochromatic wave $\phi$ of frequency $T$, the modulus of continuity at scale $1/T$ is under control. This allows one to replace the supremum over all $z \in M$ by a maximum over roughly $T^n$ sample points, where $n = \dim(M)$.
The union bound is that for a finite number of points $z$
\begin{equation}
\prob\{ |X_z - \E X_z | > \varepsilon \ \text{for some} \ z\} \leq (\text{number of points} ) \max_z \prob\{ |X_z - \E X_z | > \varepsilon \}.
\end{equation}
For our application, the number of points is proportional to $T^n$. By the union bound, there will be only a $o(1)$ probability of there being \emph{some} point $z$ at which a deviation of $\varepsilon$ occurs, provided the probability of a deviation at any \emph{single} point $z$ is $o(T^{-n})$. Thus the union bound reduces the problem to a calculation at a single point. That calculation can be done by a Chernoff bound.

Passing to the grid brings with it another error: Conceivably the integrals around all the gridpoints are within $\varepsilon$ of their average, but nevertheless the integral around some point off the grid differs considerably. We must show that this ``off-grid" error occurs with only a low probability.

To be more precise, suppose there is a point $z$ such that
\[
|X_z - \E[X_z] | > \varepsilon.
\]
Take a grid of points $z_j$ such that every point of $M$ is within $1/T$ of a gridpoint. The number of gridpoints is thus of order $T^n$. We have
\[
\varepsilon < | X_z - X_{z_j} | + |X_{z_j} - \E[X_{z_j}] | + |\E[X_{z_j}] - \E[X_z] |
\]
Thus one of the three terms must be greater than $\varepsilon/3$. The difference of expected values is non-random and small: Both are $1 + O(\eta^{-1})$, so their difference is $O(\eta^{-1})$. Eventually, this will not be greater than $\varepsilon/3$ since we assume $\eta(T) \rightarrow \infty$. Alternatively, note that
\begin{align*}
| \E[X_{z_{j}}] - \E[X_z] | &= \sigma^2 \left| \frac{1}{\vol(B_r(z))} \int_{B_r(z)} K(x,x) dx - \frac{1}{\vol(B_r(z_j)} \int_{B_r(z_j)} K(x,x) dx \right| \\
&\lesssim \frac{ \vol(B_r(z) \Delta B_r(z) ) }{\vol(B_r)} \\
\end{align*}
To bound the volume of the symmetric difference, we have the following claim.
\begin{claim}
If $B_r(z)$ and $B_r(z')$ are balls of radius $r \rightarrow 0$ centered at points $z, z'$ separated by less than $r$ in a Riemannian manifold of dimension $n$,
\begin{equation}
\vol(B_r(z) \Delta B_r(z') ) \lesssim r^{n-1} d(z,z') .
\end{equation}
\end{claim}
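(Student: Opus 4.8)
The plan is to show that the symmetric difference is contained in a thin annular shell around each of the two centers, and then to bound the volume of such a shell by $r^{n-1}$ times its thickness. Write $\delta = d(z,z')$, so $\delta < r$ by hypothesis. If $x \in B_r(z) \setminus B_r(z')$, then $d(x,z) < r$ while $d(x,z') \geq r$, and the triangle inequality gives $d(x,z) \geq d(x,z') - d(z,z') \geq r - \delta$; hence $x$ lies in the annulus $A_z = \{x \in M : r - \delta \leq d(x,z) < r\}$. By symmetry $B_r(z') \setminus B_r(z) \subseteq A_{z'}$. Since $B_r(z)\Delta B_r(z') = (B_r(z)\setminus B_r(z')) \cup (B_r(z')\setminus B_r(z))$, it therefore suffices to prove $\vol(A_z) \lesssim r^{n-1}\delta$ and likewise for $A_{z'}$, with constants independent of the centers.

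Next I would estimate $\vol(A_z)$ in geodesic polar coordinates centered at $z$. Since $M$ is compact, the injectivity radius is bounded below by some $\iota_0 > 0$, so for $r$ small enough (which is exactly the regime $r \to 0$ of the claim) every ball $B_r(z)$ lies within a normal coordinate chart, and the volume density in these coordinates is $1 + O(\rho^2)$ with the implied constant uniform in $z$ by smoothness of $g$ and compactness. Consequently the Riemannian volume element in geodesic polar coordinates $(s,\omega) \in (0,\iota_0)\times S^{n-1}$ has the form $\mathcal{A}(s,\omega)\, ds\, d\omega$ with $\mathcal{A}(s,\omega) \lesssim s^{n-1}$, again uniformly in $z$ and $\omega$. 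Integrating over the shell,
\[
\vol(A_z) = \int_{S^{n-1}} \int_{r-\delta}^{r} \mathcal{A}(s,\omega)\, ds\, d\omega \lesssim \int_{r-\delta}^{r} s^{n-1}\, ds \lesssim r^{n-1}\delta ,
\]
the last inequality because $0 \leq s \leq r$ on the range of integration and the interval has length $\delta$. The identical bound holds for $\vol(A_{z'})$, and summing the two contributions proves the claim.

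The only genuine obstacle is ensuring that these estimates are uniform in the base point of the coordinate chart: one needs a uniform lower bound on the injectivity radius and uniform control on the volume density in normal coordinates, independent of where the chart is centered. Both are consequences of compactness of $M$ together with smoothness of $g$, and they are precisely what make $r \to 0$ the natural hypothesis. An alternative that avoids polar coordinates is to apply the coarea formula to the distance function $x \mapsto d(x,z)$, which has unit gradient away from $z$ and the cut locus; this writes $\vol(A_z)$ as $\int_{r-\delta}^{r}$ of the $(n-1)$-dimensional area of the geodesic sphere of radius $s$ about $z$, and the same uniform bound $\lesssim s^{n-1}$ on that area closes the argument in the same way.
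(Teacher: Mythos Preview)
Your argument is correct. It differs from the paper's proof, which is a one-line sketch: the paper simply remarks that for small $r$ one can compare to the Euclidean picture, where the symmetric difference of two nearby balls fits inside a box with $n-1$ sides of order $r$ and one side of order $d(z,z')$. Your approach instead uses the triangle inequality to trap $B_r(z)\Delta B_r(z')$ in two annular shells of thickness $\delta$ and then integrates the Riemannian volume density in geodesic polar coordinates. What you gain is rigor and explicitness: the annulus containment is an exact set-theoretic inclusion, and the polar-coordinate estimate with $\mathcal{A}(s,\omega)\lesssim s^{n-1}$ is precisely the uniform comparison to Euclidean volume that the paper's sketch leaves implicit. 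The paper's box picture is slightly more geometric and perhaps more intuitive, but your version makes clear exactly where compactness of $M$ and smallness of $r$ enter, via the uniform injectivity-radius and density bounds you highlight at the end.
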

\begin{proof}
Indeed, for small radii $r$, we can compare to Euclidean balls or simply to a Euclidean box with $n-1$ sidelengths of order $r$ and a remaining side of order $s = d(z,z')$. The bound $r^{n-1}s$ holds for larger separations as well, but becomes worse than the easier bound 
\[
\vol(B \Delta B') \lesssim \vol(B)+\vol(B') \lesssim r^n.
\]
\end{proof}
With a separation of less than $1/T$ between $z$ and $z_j$, we therefore have
\[
| \E[X_{z_j}] - \E[X_z] | \lesssim \frac{r^{n-1} T^{-1} }{r^n} = \frac{1}{rT}.
\]
Assuming $rT \rightarrow \infty$, this term will be less than $\varepsilon/3$. Thus the difference of expected values will eventually be less than $\varepsilon/3$ whether we assume $\eta \rightarrow \infty$ or $rT \rightarrow \infty$ (and later, we will assume that both of them diverge faster than logarithmically). In the case of an $\varepsilon$-difference of $\int_B |\phi|^2$ from its mean, it is one of the other two terms $|X_z - X_{z_j}|$ or $|X_{z_j} - \E[X_{z_j}]|$ that must be greater than $\varepsilon/3$ (and in fact, almost greater than $\varepsilon/2$ once $rT$ and $\eta$ are large enough). 

Suppose it is the integrals around $z$ versus $z' = z_j$ that differ by more than $\varepsilon/3$. We have
\[
\left| \int_B | \phi |^2 - \int_{B'} | \phi |^2 \right| \lesssim \int_{B \Delta B'} |\phi|^2 \lesssim \vol(B \Delta B') \| \phi \|_{\infty}^2.
\]
Since $d(z,z_j) < 1/T$, the same volume bound as above gives
\[
\frac{\varepsilon}{3} \lesssim r^{-n} \left( r^{n-1} T^{-1} \| \phi \|_{\infty}^2 \right).
\]
That is,
\[
\| \phi \|_{\infty} \gtrsim \sqrt{\varepsilon rT}.
\]
To control the probability of $\phi$ having such a large maximum, we use another union bound. 
More precise estimates of $\| \phi \|_{\infty}$ have been given by Burq-Lebeau \cite{BL} and Canzani-Hanin \cite{CH}, but we include the following sketch to keep the present argument self-contained.
Again, take a grid of roughly $T^n$ points. Either there is a gridpoint $w_j$ at which $|\phi(w_j) | \geq C \sqrt{\varepsilon rT}$ or else there are two points separated by only $1/T$ at which the values of $\phi$ differ by at least $C \sqrt{\varepsilon rT}$. The latter is very unlikely because $1/T$ is the wave scale for $\phi$. Whereas the values $\phi(w)$ are Gaussian with unit variance, the derivatives of $\phi$ are Gaussian with variance $T^2$, so a difference of $C\sqrt{\varepsilon rT}$ between points separated by only $1/T$ would require $\phi$ to have some directional derivative more than $\sqrt{\varepsilon rT}$ standard deviations above its mean. This occurs with probability less than $\exp(-c \varepsilon rT)$. Likewise, having $|\phi(w_j) | \geq C \sqrt{\varepsilon rT}$ requires a Gaussian to be more than $\sqrt{\varepsilon rT}$ standard deviaions above its mean. From the union bound,
\[
\prob( \| \phi \|_{\infty} \geq c \sqrt{\varepsilon rT} ) \lesssim T^n \exp(-c' \varepsilon rT)
\]
which is negligible as long as $rT/\log(T) \rightarrow \infty$. Thus we can move to the final case: The probability that an integral around any single point shows a deviation of more than $\varepsilon/3$.

\section{Chernoff bound} \label{sec:chernoff}

Each variable $X_z$ is a quadratic form in the coefficients $c_j$. Writing $B = B_r(z)$, we have
\begin{equation}
X_z = \frac{1}{\vol(B)} \int_B |\phi|^2 = \sum_j \sum_k c_j c_k \frac{1}{\vol(B)} \int_B \phi_j \overline{\phi_k}.
\end{equation}
We scale by the variance to write $c_j = \sigma \mathfrak{z}_j$, where $\mathfrak{z}_j$ is a standard Gaussian of mean 0 and variance 1. Thus
\begin{equation}
X_z = \mathfrak{z}^T A \mathfrak{z}
\end{equation}
where the matrix $A$ has entries
\begin{equation}
A_{jk} = \frac{\sigma^2}{\vol(B)} \int_B \phi_j \overline{\phi_k}.
\end{equation}
Note that this matrix depends on $z$, as well as $r$ and $T$, but we have suppressed this in the notation.
Since $A$ is a symmetric matrix, or Hermitian if we prefer to start from complex-valued eigenfunctions $\phi_j$, we may diagonalize to write $A = U^T D U$ where $U$ is orthogonal (or unitary, in the complex case) and $D$ is diagonal with entries, say, $\lambda_j$.
In eigencoordinates, the random variable $X_z$ becomes
\begin{equation}
X_z = \mathfrak{z}^T A\mathfrak{z} = (U\mathfrak{z})^T D (U\mathfrak{z}) = \sum_j \lambda_j y_j^2
\end{equation}
where $y = U\mathfrak{z}$ is again a standard Gaussian vector. 

Evaluating a Gaussian integral, it follows that the moment generating function of a quadratic form $\mathfrak{z}^T A\mathfrak{z}$ in standard Gaussians $\mathfrak{z} = (\mathfrak{z}_1,\ldots, \mathfrak{z}_N)$ is
\begin{equation} \label{eq:mgf}
g(s) = \E\left[ e^{s\mathfrak{z}^TA\mathfrak{z}} \right] = \prod_{j=1}^{N} (1-2s\lambda_j)^{-1/2}
\end{equation}
where $\lambda_j$ are the eigenvalues of $A$. In the complex case, each factor effectively occurs twice because of the real and imaginary parts of $y_j$, leading to $(1-2s\lambda_j)^{-1}$ instead of $(1-2s\lambda_j)^{-1/2}$. 
One has convergence in (\ref{eq:mgf}) as long as $1-2s\lambda_j > 0$ for all $j$, so $s$ must be small enough. Specifically, $g(s)$ is defined for $s < 1/(2\lambda_{\max})$, where $\lambda_{\max}$ is the largest eigenvalue of $A$. 

Estimates for $g(s)$ allow us to execute a Chernoff bound on the tail probability.
For any $s > 0$, $X > \E[X] + \varepsilon$ if and only if $e^{sX} > e^{s\E[X] + s\varepsilon}$, so by Markov's inequality
\begin{equation}
\prob \{X > \E[X] + \varepsilon \} \leq g(s) e^{-s\E[X] - s\varepsilon} = \exp \left(-s\varepsilon - s\E[X] + \log{g(s)} \right).
\end{equation}
In the case at hand, where $X = \mathfrak{z}^T A \mathfrak{z}$, we have
\begin{equation}
-s\varepsilon - s\E[X] + \log{g(s)} = -s\varepsilon - s\E[X] + \frac{1}{2} \sum_{j} -\log(1-2s\lambda_j).
\end{equation}
Expanding the logarithm in a power series (provided $2s\lambda_{\max} < 1$), we have
\[
\frac{1}{2} \sum_j -\log(1- 2s\lambda_j) = \sum_{p=1}^{\infty} \frac{1}{2p} \sum_j (2s\lambda_j)^p.
\]
The term $p=1$ contributes $s\sum_j \lambda_j = s\E[X]$. This cancels the expected value above so that
\begin{align*}
-s\varepsilon - s\E[X] + \log{g(s)} &= -s\varepsilon + \sum_{p \geq 2} \frac{1}{2p} \sum_j (2s\lambda_j)^p \\
&= -s\varepsilon + s^2 \sum_j \lambda_j^2 + \sum_{p \geq 3} \frac{1}{2p} \sum_j (2s\lambda_j)^p.
\end{align*}
We would like to minimize the sum of the first two terms by choosing
\begin{equation}
s_? = \frac{\varepsilon}{2 \sum \lambda_j^2}
\end{equation}
but it is not clear whether $2s_? \lambda_{\max} < 1$, that is, whether $g(s_?)$ is defined. We would need to know that 
\[
\lambda_{\max} < \frac{1}{\varepsilon} \sum_j \lambda_j^2
\]
at least for sufficiently small $\varepsilon$. 
In the case of the manifold $S^2$ with its usual round metric, we were able to show in \cite{dci} that $\lambda_{\max}$ and $\sum \lambda_j^2$ are of the same order of magnitude, so that this holds once $\varepsilon$ is small enough. 
Here, we choose a different $s$ to guarantee that $2s\lambda_{\max} < 1$, namely
\begin{equation}
s = c \left( \sum_j \lambda_j^2 \right)^{-1/2}
\end{equation}
where $c < 1/2$. Note that $\lambda_{\max} \leq \sqrt{ \sum \lambda_j^2 }$, so that this is a valid choice of $s$.

\begin{claim}
For this choice $s = c/\sqrt{\sum \lambda_j^2}$, where $0 < c < 1/2$, we have
\begin{equation}
\log{g(s)} - s\E[X] \leq A s^2 \sum_j \lambda_j^2
\end{equation}
where $A$ can be taken as $2c^2/(1-2c)^2$.
\end{claim}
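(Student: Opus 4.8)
The plan is to extract the claim directly from the power-series identity established just above it,
\[
\log g(s) - s\E[X] = \sum_{p\ge 2}\frac{1}{2p}\sum_{j}(2s\lambda_j)^p ,
\]
where the $p=1$ term has already cancelled $s\E[X]=s\sum_j\lambda_j$. So the claim is entirely a statement about summing this tail, and the whole point of the chosen $s=c\bigl(\sum_j\lambda_j^2\bigr)^{-1/2}$ is that it keeps every ratio $2s\lambda_j$ well below $1$: since $\lambda_{\max}\le\sqrt{\sum_j\lambda_j^2}$ we have $0\le 2s\lambda_j\le 2s\lambda_{\max}\le 2c<1$ for all $j$. Nonnegativity of the $\lambda_j$ is free: $X_z=\mathfrak{z}^{T}A\mathfrak{z}=\vol(B)^{-1}\int_B|\phi|^2\ge 0$ identically, so $A$ is positive semidefinite; in particular every summand above is nonnegative and the left-hand side is already at least its $p=2$ term $s^2\sum_j\lambda_j^2=c^2$, which is the dominant piece — the rest of the work is just to see that the higher-order terms are comparable to it.

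Next I would fix an index $j$, write $x_j:=2s\lambda_j\in[0,2c]$, and apply an elementary one-variable bound of the shape $\sum_{p\ge 2}x^p/p\le C(x)\,x^2$ with $C$ increasing, then substitute $x=x_j\le 2c$ to turn $C(x_j)$ into the constant $C(2c)$. Two convenient options, both valid for $0\le x<1$: the sharp one $\sum_{p\ge 2}x^p/p=-\log(1-x)-x\le \tfrac{x^2}{2(1-x)}$ (from $1/p\le 1/2$), or the cruder $\sum_{p\ge 2}x^p/p\le\sum_{p\ge 2}(p-1)x^p=\tfrac{x^2}{(1-x)^2}$ (from $1/p\le p-1$). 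Using the second,
\[
\sum_{p\ge 2}\frac{x_j^p}{2p}\le\frac{x_j^2}{2(1-x_j)^2}\le\frac{x_j^2}{2(1-2c)^2},
\]
and then summing over $j$ with $\sum_j x_j^2=4s^2\sum_j\lambda_j^2$ gives
\[
\log g(s)-s\E[X]\le\frac{1}{2(1-2c)^2}\sum_j x_j^2=\frac{2}{(1-2c)^2}\,s^2\sum_j\lambda_j^2 ,
\]
which is the claim with $A=2/(1-2c)^2$; since $s^2\sum_j\lambda_j^2=c^2$ the right-hand side is $\tfrac{2c^2}{(1-2c)^2}$, exactly as stated. (The sharp inequality yields instead $A=1/(1-2c)$, which is all that is needed afterward, so either bookkeeping is acceptable.)

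I do not expect a real obstacle: the argument is elementary once the series expansion is in hand. The only points that need care are recording that $A$ is positive semidefinite, so no sign cancellations can enter and the geometric-series majorizations are legitimate, and checking that the factor $(1-x_j)^{-1}$ (respectively $(1-x_j)^{-2}$) is bounded uniformly in $j$ — which is precisely what the constraint $2s\lambda_{\max}\le 2c<1$ built into the choice of $s$ provides. Should one ever want the same estimate for a merely symmetric (not semidefinite) $A$, one replaces $x_j$ by $|x_j|$ throughout; the bound $|2s\lambda_j|\le 2s\lambda_{\max}\le 2c$ still holds and the computation is unchanged.
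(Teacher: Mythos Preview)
Your proof is correct and follows essentially the same route as the paper: both arguments bound $-\log(1-x)-x$ by a constant times $x^2$ uniformly for $x\in[0,2c]$, apply this with $x=2s\lambda_j$, and sum over $j$. The paper obtains the one-variable bound via Taylor's formula with integral remainder for $-\log(1-x)$, while you obtain it by majorizing the power series term by term; these are equivalent bookkeeping for the same inequality, and both yield $A=2/(1-2c)^2$ so that the right-hand side equals $2c^2/(1-2c)^2$.
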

\begin{proof}
Indeed, this follows from Taylor's theorem. For a twice differentiable function $f$, we have
\[
f(x) = f(a) + f'(a) (x-a) + \int_a^x f''(t) (x-t) dt
\]
Applied to the function $f(x) = -\log(1-x)$, this gives
\[
-\log(1-x) = x + \int_0^x \frac{1}{(1-t)^2} (x-t) dt.
\]
In particular, for $x \leq a$ we have
\[
-\log(1-x) - x \leq x^2 (1 - a)^{-2}
\]
so we may take $A = (1-a)^{-2}$ to have a bound valid for all $x$ up to $a$. We take $x = 2s\lambda_j$ where $s = c(\sum \lambda_j^2)^{-1/2}$ with $0 < c < 1/2$. These values of $x$ are at most
\[
x = 2s\lambda_j \leq 2c \frac{\lambda_{\max}}{(\sum \lambda_j^2 )^{1/2} } \leq 2c.
\]
Taylor's theorem then gives
\[
-\log(1-2s\lambda_j) - 2s\lambda_j \leq (1-2c)^{-2} 4s^2 \lambda_j^2 = \frac{4c^2}{(1-2c)^2} \lambda_j^2 / \sum_i \lambda_i^2 
\]
Summing over $j$ and dividing by 2, we get
\[
-\log{g(s)} - s \sum_j \lambda_j \leq \frac{2c^2}{(1-2c)^2}
\]
Hence, noting again that $\sum_j \lambda_j = \E[X]$, we have proved the claim.
\end{proof}

With this estimate in hand, we can bound the tail probability as follows:
\begin{equation}
\prob\{ X > \E[X] + \varepsilon \} \leq e^{2c^2/(1-2c)^2} \exp \left(-c \varepsilon \big( \sum_j \lambda_j^2 \big)^{-1/2} \right)
\end{equation}

The lower tail, where $X < \E[X] - \varepsilon$, is slightly different but can be treated by the same method. We have $X < \E[X] - \varepsilon$ if and only if $-X > \E[-X] + \varepsilon$, so we can apply the argument above with $-X$ in place of $X$. Instead of $g(s)$, the relevant function for the Chernoff bound is
\begin{equation} \label{eq:g-}
g_-(s) = \E\left[ e^{-sX} \right] = \prod_j (1+2s\lambda_j)^{-1/2}.
\end{equation}
This function $g_-(s)$ is defined for all $s \geq 0$ whereas $g(s)$ is defined only for sufficiently small $s$. The Chernoff bound is
\begin{equation}
\prob\{ -X > \E[-X] + \varepsilon \} \leq g_-(s) e^{s\E[X]}  e^{-s\varepsilon} .
\end{equation}
We have $-\log(1+x) \leq -x + x^2/2$ for all $x \geq 0$, so that
\[
\log{g_-(s)} + s\E[X] \leq \frac{1}{4} \sum_j (2s \lambda_j)^2 \leq c^2
\]
where we choose $s = c \big( \sum \lambda_j^2 \big)^{-1/2}$ as above. This shows that the lower tail probability obeys the same bound as the upper tail probability, namely
\begin{equation}
\prob\{ -X > \E[-X] + \varepsilon \} \leq e^{c^2} \exp \left(-c \varepsilon \big( \sum \lambda_j^2 \big)^{-1/2} \right).
\end{equation}
In fact, since $g_-(s)$ is defined for all $s$, we could simply choose $s = s_?$ to get an even better bound. This doesn't help us though, since we control both upper and lower tail together by the sum of their respective bounds:
\begin{align*}
\prob\{ |X - \E[X] | > \varepsilon \} &\leq \prob\{ -X > \E[-X] + \varepsilon \} + \prob\{ X > \E[X] + \varepsilon \} \\
&\leq \left( e^{2c^2/(1-2c)^2}+ e^{c^2} \right)   \exp\left(-c \varepsilon \left( \sum \lambda_j^2 \right)^{-1/2} \right)
\end{align*}
for any $c<1/2$.

In order to take advantage of this, we need an estimate on the second moment $\sum \lambda_j^2$. 
\begin{lemma} \label{lem:variance-bound}
\begin{equation}
\sum_j \lambda_j^2 \lesssim \left( (rT)^{-(n-1)/2} + \eta^{-1} \right)^2.
\end{equation}
\end{lemma}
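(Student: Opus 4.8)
The plan is to recognize $\sum_j \lambda_j^2 = \operatorname{tr}(A^2) = \sum_{j,k} |A_{jk}|^2$ and expand this directly in terms of the two-point function. Since $A_{jk} = \frac{\sigma^2}{\vol(B)}\int_B \phi_j\overline{\phi_k}$, we have
\[
\sum_j \lambda_j^2 = \frac{\sigma^4}{\vol(B)^2} \sum_{j,k} \left( \int_B \phi_j(x)\overline{\phi_k(x)}\,dx \right)\overline{\left( \int_B \phi_j(y)\overline{\phi_k(y)}\,dy \right)} = \frac{\sigma^4}{\vol(B)^2} \int_B\int_B |K(x,y)|^2\,dx\,dy,
\]
using $\sum_j \phi_j(x)\overline{\phi_j(y)} = K(x,y)$ to collapse the sums over $j$ and $k$. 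So the task reduces to bounding $\frac{\sigma^4}{\vol(B)^2}\iint_{B\times B}|K(x,y)|^2$, and since $\sigma^2 \asymp N^{-1}$ with $N \asymp \eta T^{n-1}$ by Weyl's law, this is $\asymp (\eta T^{n-1})^{-2}\vol(B)^{-2}\iint_{B\times B}|K(x,y)|^2$.

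Next I would split the integration domain according to whether $d(x,y)$ is large or small compared to the wave scale $1/T$. For the near-diagonal part $d(x,y) \lesssim 1/T$, I bound $|K(x,y)| \lesssim K(x,x)^{1/2}K(y,y)^{1/2} \lesssim N/\vol(M) \asymp \eta T^{n-1}$ (Cauchy--Schwarz on the spectral sum, together with the on-diagonal Weyl law $K(x,x)\asymp \eta T^{n-1}$), and the measure of this near-diagonal set inside $B\times B$ is $\lesssim \vol(B)\cdot T^{-n}$; this contributes $\lesssim (\eta T^{n-1})^{-2}\vol(B)^{-2}\cdot(\eta T^{n-1})^2\cdot \vol(B)T^{-n} = \vol(B)^{-1}T^{-n} \asymp (rT)^{-n}$, comfortably within $(rT)^{-(n-1)}$. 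For the off-diagonal part $d(x,y)\gtrsim 1/T$, I invoke the Canzani--Hanin form of the local Weyl law from Section~\ref{sec:semi}: after normalizing, $\frac{1}{N/\vol(M)}K(x,y)$ is approximated by the universal scaling limit (a Bessel-type kernel) evaluated at $Td(x,y)$, which decays like $(Td(x,y))^{-(n-1)/2}$, with an error that is $o(1)$ uniformly and contributes a lower-order term controlled by $\eta^{-1}$. Thus $|K(x,y)|^2 \lesssim (\eta T^{n-1})^2\big((Td(x,y))^{-(n-1)} + (\text{error})\big)$ on this range.

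Then I integrate. Writing $s = d(x,y)$, the off-diagonal contribution is $\lesssim (\eta T^{n-1})^{-2}\vol(B)^{-2}\cdot(\eta T^{n-1})^2\int_B\int_B (Ts)^{-(n-1)}\,dx\,dy$. Fixing $x$ and integrating over $y\in B$, the shell at distance $s$ has measure $\asymp s^{n-1}\,ds$, so $\int_{B}(Ts)^{-(n-1)}dy \lesssim T^{-(n-1)}\int_0^{2r} s^{n-1}s^{-(n-1)}\,ds \asymp T^{-(n-1)}r$; integrating the remaining $x$ over $B$ gives another factor $\vol(B)\asymp r^n$, for a total $\lesssim \vol(B)^{-2}\cdot r^n \cdot T^{-(n-1)}r = \vol(B)^{-1}r\,T^{-(n-1)} \asymp r^{1-n}\cdot r\, T^{-(n-1)} = (rT)^{-(n-1)}\cdot r^{2-n}\cdot r^{n-2}$; tracking constants carefully this is $\asymp (rT)^{-(n-1)}$. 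The Weyl error term, being $o(1)$ relative to $N/\vol(M)$, feeds through as $\lesssim \eta^{-2}$. Collecting the three pieces $(rT)^{-n} + (rT)^{-(n-1)} + \eta^{-2} \lesssim (rT)^{-(n-1)} + \eta^{-2} \leq \big((rT)^{-(n-1)/2} + \eta^{-1}\big)^2$ gives the claimed bound. In dimension $n=2$ one must be slightly more careful since $\int_0^{2r} s^{-1/2}\cdot s^{1/2}\frac{ds}{s}$-type integrals are only borderline; this is where the extra logarithmic factor in the hypothesis $rT/\log(T)^2\to\infty$ originates, reflecting a $\log$ in the Weyl remainder.

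The main obstacle is making the off-diagonal Weyl approximation genuinely uniform over all $x,y\in B$ and all ball centers $z$, including the transition region $d(x,y)\sim 1/T$ where the scaling-limit kernel is $O(1)$ rather than decaying — one must check the Canzani--Hanin error is uniform there and does not swamp the main term. A secondary subtlety is bookkeeping the powers of $r$ and $T$ so that the off-diagonal integral comes out as exactly $(rT)^{-(n-1)}$ and not something larger; the shell-volume computation $\int_0^{2r}s^{n-1}\cdot(Ts)^{-(n-1)}\,ds = T^{-(n-1)}\cdot 2r$ is the crux, and it works precisely because the decay exponent $(n-1)/2$ of the kernel, when squared, matches the dimension of the sphere.
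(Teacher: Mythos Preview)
Your approach is essentially the paper's: express $\sum_j \lambda_j^2 = \tr(A^2)$ as the double integral $\frac{\sigma^4}{\vol(B)^2}\iint_{B\times B}|K|^2$ (the paper's Lemma~\ref{lem:trap} and equation~(\ref{eqn:tra2})), then feed in the pointwise bound on $K$ from Claim~\ref{claim:simple} and integrate in polar coordinates. The paper does not split into near- and far-diagonal regions; it simply uses the off-diagonal estimate~(\ref{eqn:upp}) throughout, noting that although it diverges as $\rho\to 0$ the singularity $(T\rho)^{-(n-1)}$ against $\rho^{n-1}\,d\rho$ is integrable. Your split is a harmless variant and your shell computation reaches the correct answer $(rT)^{-(n-1)}$, though the intermediate algebra is garbled.

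Your final paragraph about $n=2$, however, is mistaken and should be removed. There is no borderline integral in dimension two: for $n=2$ the main term is $\int_0^{2r}(T\rho)^{-1}\,\rho\,d\rho = 2r/T$, perfectly finite and giving exactly $(rT)^{-1}$ as the general formula predicts. Lemma~\ref{lem:variance-bound} holds uniformly for all $n\geq 2$ with no logarithmic correction. The stronger hypothesis $rT/\log(T)^2\to\infty$ for $n=2$ enters only later, in Section~\ref{sec:collect}: the Chernoff bound yields a tail of size $\exp\big(-c\varepsilon\,(rT)^{(n-1)/2}\big)$ (ignoring the $\eta^{-1}$ term), and absorbing the $T^n$ from the union bound requires $(rT)^{(n-1)/2}/\log T\to\infty$. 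For $n\geq 3$ this is implied by $rT/\log T\to\infty$, but for $n=2$ the exponent $(n-1)/2=1/2$ forces $rT/\log(T)^2\to\infty$. It has nothing to do with a logarithm in the Weyl remainder or with the variance integral itself.
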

We will prove the lemma using estimates for the two-point function $K(x,x')$. We have
\[
\sum_j \lambda_j^2 = \tr(A^2).
\]
The trace $\tr(A^2)$, and also the trace of any power of $A$, can be expressed in terms of $K(x,x')$ as follows.

Recall that
\[
K(x,x') = \sum_j \phi_j(x)\overline{\phi_j(x')}.
\]
Since the $(j,k)$-entry of $A$ is
\begin{equation*}
A_{jk} = \frac{\sigma^2}{\text{vol}(B)} \int_{B} \phi_j \overline{\phi_k},
\end{equation*}
the entries of $A^p$ are
\begin{equation*}
A_{jk}^{(p)} = \frac{\sigma^{2p}}{\text{vol}(B)^p}\sum_{k_1} \cdots \sum_{k_{p-1}} \int_{B} \phi_j \overline{\phi_{k_1}} \int_{B} \phi_{k_1} \overline{\phi_{k_2}} \ldots \int_{B} \phi_{k_{p-2}} \overline{\phi_{k_{p-1}}} \int_{B} \phi_{k_{p-1}} \overline{\phi_k}.
\end{equation*}
When we sum the diagonal entries, we get
\begin{align*}
\tr(A^p) &= \sum_j A_{jj}^{(p)} \\
&= \frac{\sigma^{2p}}{\vol(B)^{p}} \sum_j \sum_{k_1} \cdots \sum_{k_{p-1}} \int_{B} \phi_j \overline{\phi_{k_1}} \int_{B} \phi_{k_1} \overline{\phi_{k_2}} \ldots \int_{B} \phi_{k_{p-2}} \overline{\phi_{k_{p-1}}} \int_{B} \phi_{k_{p-1}} \overline{\phi_j} 
\end{align*}
We can equally well express this product of integrals as one multiple integral:
\begin{align*}
&\tr(A^p) = \vol(B)^{-p} \int_B dx_1  \cdots \int_B dx_p  \\
&\sum_j \sum_{k_1} \cdots \sum_{k_{p-1}} \phi_j(x_1) \overline{\phi_{k_1}(x_1)} \phi_{k_1}(x_2)\overline{\phi_{k_2}(x_2)} \ldots \phi_{k_{p-2}}(x_{p-1}) \overline{\phi_{k_{p-1}}(x_{p-1})} \phi_{k_{p-1}}(x_{p}) \overline{\phi_j(x_{p})}
\end{align*}
The integrand factors:
\begin{align*}
&\sum_j \sum_{k_1} \cdots \sum_{k_{p-1}} \phi_j(x_1) \overline{\phi_{k_1}(x_1)} \phi_{k_1}(x_2)\overline{\phi_{k_2}(x_2)} \ldots \phi_{k_{p-2}}(x_{p-1}) \overline{\phi_{k_{p-1}}(x_{p-1})} \phi_{k_{p-1}}(x_{p}) \overline{\phi_j(x_{p})} \\
&= \sum_j \phi_j(x_1) \overline{\phi_j(x_{p})} \sum_{k_1} \overline{\phi_{k_1}(x_1)} \phi_{k_1}(x_2) \cdots \sum_{k_{p-1}} \overline{\phi_{k_{p-1}}(x_{p-1})}\phi_{k_{p-1}}(x_{p}) \\
&= K(x_1,x_{p}) K(x_2,x_1) \cdots K(x_{p}, x_{p-1}) 
\end{align*}
We summarize this as follows:
\begin{lemma} \label{lem:trap}
If $A$ is the matrix with entries
\begin{equation}
A_{jk} = \frac{\sigma^{2p}}{\vol(B)} \int_B \phi_j \overline{\phi_k}
\end{equation}
and $K$ is the kernel given by
\begin{equation}
K(x,x') = \sum_j \phi_j(x) \overline{\phi_j(x')}
\end{equation}
then
\begin{equation}
\tr(A^p) = \frac{\sigma^{2p}}{\vol(B)^p} \int_B \cdots \int_B \prod_{j=1}^p K(x_j, x_{j-1}) \ dx_1 \ldots dx_{p}
\end{equation}
with the indices interpreted cyclically so that $x_0$ means $x_p$.
\end{lemma}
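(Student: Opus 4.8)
The plan is to prove the identity by expanding $A^p$ entry by entry and recognizing the resulting finite spectral sums as values of the kernel $K$; this is exactly the computation carried out just before the statement, which I would organize as an induction on $p$. Concretely, I would first show that
\[
(A^p)_{jk} = \frac{\sigma^{2p}}{\vol(B)^p} \int_B \cdots \int_B \phi_j(x_1)\,\overline{\phi_k(x_p)} \prod_{i=1}^{p-1} K(x_{i+1},x_i)\, dx_1 \cdots dx_p,
\]
where for $p=1$ the product is empty and the formula is just the definition $A_{jk} = \frac{\sigma^2}{\vol(B)} \int_B \phi_j \overline{\phi_k}$. For the inductive step one writes $(A^{p+1})_{jk} = \sum_m (A^p)_{jm} A_{mk}$, substitutes the inductive hypothesis together with the definition of $A_{mk}$ as an integral over a fresh variable $x_{p+1}$, and pulls the sum over $m$ inside the (finitely many) integrals. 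The only new spectral sum that appears is $\sum_m \overline{\phi_m(x_p)}\phi_m(x_{p+1})$, which equals $K(x_{p+1},x_p)$ since $K(a,b) = \sum_m \phi_m(a)\overline{\phi_m(b)}$ in the complex case, hence also in the real case. This yields the stated formula for $(A^{p+1})_{jk}$ with the product extended to $i=p$.

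Next I would set $j=k$, sum over $j$, and move that sum inside the integral: the factor $\phi_j(x_1)\overline{\phi_j(x_p)}$ sums to $\sum_j \phi_j(x_1)\overline{\phi_j(x_p)} = K(x_1,x_p)$, so that
\[
\tr(A^p) = \frac{\sigma^{2p}}{\vol(B)^p} \int_B \cdots \int_B K(x_1,x_p) \prod_{i=1}^{p-1} K(x_{i+1},x_i)\, dx_1 \cdots dx_p.
\]
Finally I would reindex with the cyclic convention $x_0 := x_p$: the leading factor becomes $K(x_1,x_0)$ and $\prod_{i=1}^{p-1} K(x_{i+1},x_i)$ becomes $\prod_{i=2}^{p} K(x_i,x_{i-1})$, so the integrand is $\prod_{i=1}^{p} K(x_i,x_{i-1})$, which is precisely the assertion of the lemma.

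Since the window in (\ref{eq:mono}) contains only finitely many eigenvalues, every sum over eigenfunction indices is a finite sum and every integrand is continuous on the compact set $B^p$, so all the interchanges of summation and integration above are elementary and no convergence question arises. The one thing that needs care is the bookkeeping: keeping track of which of the two arguments of each $K$ carries the complex conjugate, the order in which those arguments appear, and the wraparound produced by taking the trace. I expect this index-chasing to be the only — and quite mild — obstacle; there is no analytic content beyond Fubini for finite sums and the defining property of $K$.
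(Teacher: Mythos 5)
Your proposal is correct and follows essentially the same route as the paper: expand the entries of $A^p$, interchange the finite spectral sums with the integrals over $B$, and recognize each sum $\sum_m \overline{\phi_m(x_i)}\phi_m(x_{i+1})$ as $K(x_{i+1},x_i)$, with the trace supplying the cyclic factor $K(x_1,x_p)$. Organizing the bookkeeping as an induction on $p$ rather than the paper's single multi-index expansion is only a presentational difference, and your index and conjugation conventions come out right (note the $\sigma^{2p}$ in the lemma's displayed formula for $A_{jk}$ is a typo for $\sigma^2$, which you correctly used).
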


In particular, with $p=2$, we have
\begin{equation} \label{eqn:tra2}
\tr(A^2) = \frac{\sigma^4}{\vol(B)^2} \int_B dx_1 \int_B dx_2 |K(x_1, x_2)|^2.
\end{equation}

\section{Input from semiclassics} \label{sec:semi}

To prove the variance estimate in Lemma~\ref{lem:variance-bound} , we need to know the size of $K(x,x')$. Here is the basic estimate: 
\begin{claim} \label{claim:simple}
On a compact manifold of dimension $n$, with spectral kernel
\[
K(x,x') = \sum_{T-\eta < t_j \leq T} \phi_j(x) \overline{\phi_j(x')}
\]
defined over a window $\eta(T) \rightarrow \infty$ growing arbitrarily slowly and such that 
\[
\eta(T) \lesssim T^{1/2},
\]
we have
\begin{equation}
K(x,x') \lesssim T^{n-1} \eta(T)
\end{equation}
for all $x, x'$ and an improved bound for well-separated pairs:
\begin{equation} \label{eqn:upp}
K(x,x') \lesssim T^{n-1}\eta \left( (Td(x,x'))^{-(n-1)/2} + \eta^{-1} \right)
\end{equation}
improving on the trivial bound once $d(x,x') > 1/T$.
\end{claim}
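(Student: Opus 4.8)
The plan is to write the window kernel as a difference of sharp spectral projector kernels and feed each into the local Weyl law. Set $E_\lambda(x,x') = \sum_{t_j \le \lambda}\phi_j(x)\overline{\phi_j(x')}$, so that $K = E_T - E_{T-\eta}$; since $\eta \lesssim T^{1/2} \ll T$, both $T$ and $T-\eta$ are comparable to $T$. In geodesic normal coordinates about $x$, writing $v = \exp_x^{-1}(x')$ with $|v| = d(x,x')$, the form of the pointwise Weyl law established by Canzani-Hanin \cite{CH} reads
\[
E_\lambda(x,x') = (2\pi)^{-n}\int_{|\xi| \le \lambda} e^{i\langle \xi, v\rangle}\, a(x,x',\xi)\,d\xi + O(\lambda^{n-1}),
\]
with remainder uniform over $(x,x')$ in a fixed neighborhood of the diagonal and $a$ a symbol of order $0$ with $a(x,x,\xi)=1$; the symbol plays no role in the size estimates below. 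Subtracting the asymptotics at $\lambda = T$ and $\lambda = T-\eta$, the two main terms combine into an integral over the thin spherical shell $\{\,T-\eta \le |\xi| \le T\,\}$, while the two $O(T^{n-1})$ remainders combine to $O(T^{n-1})$. Writing $T^{n-1} = \eta^{-1}\cdot T^{n-1}\eta$, this error is exactly the source of the $\eta^{-1}$ term in the claimed bound, and one cannot do better on a general manifold, since the $O(\lambda^{n-1})$ remainder in Weyl's law is already sharp (e.g.\ on the round sphere).

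It then remains to bound the shell integral $I(v) = \int_{T-\eta \le |\xi| \le T} e^{i\langle\xi,v\rangle}a(x,x',\xi)\,d\xi$. Passing to polar coordinates $\xi = \rho\omega$ and carrying out the angular integration first, the inner integral $\int_{S^{n-1}}e^{i\rho\langle\omega,v\rangle}a(x,x',\rho\omega)\,d\omega$ is a stationary-phase integral in $\omega$ with phase $\langle\omega, v\rangle$: away from the two poles $\pm v/|v|$ the phase is nonstationary and integration by parts gives rapid decay, while each of the two nondegenerate critical points contributes a factor $(\rho|v|)^{-(n-1)/2}$ — equivalently, this is the decay of the Fourier transform of surface measure on $S^{n-1}$, namely $(\rho|v|)^{-(n-2)/2}J_{(n-2)/2}(\rho|v|)$, which for $n=2$ is simply $J_0(\rho|v|) = O((\rho|v|)^{-1/2})$. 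Thus the inner integral is $O(1)$ always and $O((\rho|v|)^{-(n-1)/2})$ once $\rho|v| \gtrsim 1$; integrating $\rho^{n-1}\asymp T^{n-1}$ against this bound over $\rho \in [T-\eta,T]$, an interval of length $\eta$, gives
\[
|I(v)| \lesssim T^{n-1}\eta \qquad\text{and}\qquad |I(v)| \lesssim T^{n-1}\eta\,(Td(x,x'))^{-(n-1)/2},
\]
the second improving on the first precisely when $Td(x,x') > 1$. (There is additional oscillatory cancellation available in the $\rho$-integral, but it is not needed.) Combining with the $O(T^{n-1})$ remainder gives $K(x,x') \lesssim T^{n-1}\eta\bigl((Td(x,x'))^{-(n-1)/2}+\eta^{-1}\bigr)$ whenever $d(x,x')$ lies below the injectivity radius. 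For the uniform bound $K(x,x') \lesssim T^{n-1}\eta$ valid for \emph{all} $x,x'$ — including large separations, where the oscillatory asymptotics are unavailable — I would instead use Cauchy-Schwarz, $|K(x,x')| \le \sqrt{K(x,x)\,K(x',x')}$, together with the on-diagonal pointwise Weyl law $K(x,x) = E_T(x,x) - E_{T-\eta}(x,x) = c_n\bigl(T^n - (T-\eta)^n\bigr) + O(T^{n-1}) \lesssim T^{n-1}\eta$.

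The step I expect to be the main obstacle is securing the uniformity of the $O(\lambda^{n-1})$ remainder in the local Weyl law over the whole range of separations that occurs — in particular across the transitional scale $d(x,x') \asymp 1/T$ — which is exactly what the Canzani-Hanin estimate provides, together with the (easy but needed) observation that subtracting the two asymptotics only adds the errors. A secondary, routine matter is to run the stationary-phase analysis of the sphere integral uniformly in $\rho \asymp T$ and in the order-zero amplitude. Finally, one should note that only the small-separation regime $d(x,x') \le 2r \to 0$ is needed for the application of this claim to Lemma~\ref{lem:variance-bound}, so restricting the oscillatory estimate to separations below the injectivity radius costs nothing.
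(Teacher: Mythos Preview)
Your proposal is correct and follows essentially the same route as the paper: both subtract the Canzani--Hanin local Weyl asymptotics at $T$ and $T-\eta$, identify the resulting shell integral, and use the Bessel-type decay of the spherical Fourier transform to obtain the $(Td)^{-(n-1)/2}$ factor, while the two $O(T^{n-1})$ remainders produce the $\eta^{-1}$ term. The only cosmetic differences are that the paper names the Bessel function $J_{n/2-1}$ explicitly and expands $\int_{1-\eta/T}^{1} s^{n-1}\,ds$ binomially (which is where the hypothesis $\eta \lesssim T^{1/2}$ is actually used, to absorb the resulting $\eta/T$ into $\eta^{-1}$), whereas you bound the shell integral directly and supply a Cauchy--Schwarz argument for the global uniform bound that the paper leaves implicit.
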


For $d(x,y) \lesssim 1/T$, the basis for claim \ref{claim:simple} is H\"{o}rmander's Theorem 4.4 from \cite{H}.
This in turn is based on Lax's parametrix for the wave equation, constructed in \cite{L}.
Using the wave equation in this way may break down when $Td(x,y)$ is unbounded.
For larger distances we instead appeal to the results of Canzani-Hanin \cite{CH2}.
Their Theorem 2 improves the $O(T^{n-1})$ error term in H\"ormander's estimate for $K(x,y)$ to $o(T^{n-1})$, assuming $x,y$ are in a ball $B_r(z)$ of radius $r \rightarrow 0$ arbitrarily slowly around some non-self-focal point $z$.
Without the assumption on $z$, one cannot conclude the remainder is $o(T^{n-1})$ since the sphere is a counterexample, but the method of \cite{CH2} still gives
\begin{equation} \label{eqn:canhan}
\sum_{t_j \leq T} \phi_j(x)\phi_j(y) = \frac{T^n}{(2\pi)^n} \int_{|\xi|_{g_y}<1} e^{i T \langle \exp_y^{-1}(x), \xi \rangle_{g_y} } \frac{d\xi}{\sqrt{|g_y|}} + O\left( T^{n-1} \right)
\end{equation}
where the error term is uniform over pairs $(x,y)$ with $d(x,y) < r$. In this notation, $g_y$ and $|*|_{g_y}$ are the length and inner product on the tangent space at $y$ defined by the metric $g$, $\sqrt{|g_y|}$ is the volume form, and $\exp_y$ is the exponential map. Note that $\exp_y^{-1}(x)$ is well defined for $d(x,y)$ sufficiently small (less than the injectivity radius of $M$).

Using polar coordinates at $y$, with $\omega = \exp_y^{-1}(x)$ and $\xi = s \alpha$, the difference between the main terms for $T$ and $T-\eta$ is
\begin{align*}
&\left( \frac{T}{2\pi} \right)^n \int_{s=1-\eta/T}^1 \int_{S^{n-1}} e^{iTd(x,y) \omega \cdot \alpha} s^{n-1} ds d\alpha \\
&= \left(\frac{T}{2\pi}\right)^n \frac{1 - (1 - \eta/T)^n}{n} \int_{S^{n-1}} e^{iTd(x,y) \alpha \cdot \omega} d\alpha.
\end{align*}
The integral over $S^{n-1}$ gives the Bessel function 
\[
J(T(dx,y)) = J_{n/2-1}(Td(x,y))/(Td(x,y)^{n/2-1},
\]
up to a normalizing factor depending only on $n$. This is a bounded function that begins to oscillate when $Td(x,y)$ reaches the first zeros of $J_{n/2-1}$, and decays as a power $(Td(x,y)^{-n/2+1/2}$ as $Td(x,y) \rightarrow \infty$. 
We have
\begin{equation} \label{eqn:binom}
\frac{1 - (1-\eta/T)^n}{n} = \frac{\eta}{T} + O\left( \frac{\eta}{T} \right)^2
\end{equation}
by the binomial expansion. 
This implies
\begin{equation}
K(x,y) = cT^{n-1} \eta(T) \left( \frac{J_{n/2-1}(Td(x,x')) }{(Td(x,x'))^{n/2-1} }  + O\left( \eta^{-1} + \eta T^{-1} \right) \right)
\end{equation}
for some constant $c = c_n > 0$. Note that the $\eta^{-1}$ in the error corresponds to the remainder in Weyl's law whereas $\eta T^{-1}$ is from truncating the binomial expansion in (\ref{eqn:binom}). They are equal when $\eta = T^{1/2}$.

If $d(x,y) \lesssim 1/T$, we simply use the fact that $J$ is bounded to obtain the trivial bound
\[
K(x,y) \lesssim T^{n-1} \eta.
\]
This is useful for nearby pairs $(x,y)$, but for $d(x,y) \gtrsim 1/T$ it is better to input the fact that $J(u) \lesssim u^{-n/2+1/2}$ to obtain 
\[
K(x,y) \lesssim \eta T^{n-1} \left( (Td(x,y)^{-n/2+1/2} + O\big( \eta^{-1} + \eta T^{-1} \big) \right).
\]
We have assumed $\eta \lesssim T^{1/2}$ so that $\eta T^{-1}$ can be absorbed into the error $\eta^{-1}$.
This gives (\ref{eqn:upp}).
\qed

We have assumed that $\eta \lesssim T^{1/2}$ for convenience, and indeed what we have in mind is that $\eta$ is a power of $\log{T}$. If one did want to allow larger $\eta$, the error in (\ref{eqn:upp}) would become $\eta T^{-1}$ instead of $\eta^{-1}$. For the arguments in Section~\ref{sec:collect} below to go through, one would then need to assume $\eta = o(T/\log{T})$. 

\section{Upper bound on the variance} \label{sec:var-bound}

By the triangle inequality, $d(x,x') \leq d(x,z) + d(z,x') < 2r$. Since the integrand is nonnegative, we can bound the inner integral in (\ref{eqn:tra2}) by
\begin{equation}
\int_{B_r(z)}| K(x,x')|^2 dx \leq \int_{B_{2r}(x')} |K(x,x')|^2 dx.
\end{equation}
Having moved the center to $x'$, we introduce polar coordinates $(\rho, \omega)$ where the radial coordinate $\rho = d(x,x')$ ranges from 0 to $2r$. 
The volume form is given approximately by its Euclidean counterpart:
\begin{equation}
d\vol(x) = (1 + O(\rho^2) ) \rho^{n-1} d\rho d\omega.
\end{equation}
Indeed, the volume form is obtained from the metric $g$ by $\sqrt{\det(g)}$ and we have the expansion
\begin{equation*}
\sqrt{\det(g)} = 1 - \frac{1}{6} \text{Ric}_{kl}(x') x^k x^l + O(|x|^3) = 1 + O(\rho^{2} ).
\end{equation*}

We integrate the estimate (\ref{eqn:upp}) from section \ref{sec:semi}, namely
\begin{align*}
K(x,x') &\lesssim T^{n-1} \eta \left( (T\rho)^{-(n-1)/2} + \eta^{-1} \right).
\end{align*}
This diverges as $\rho \rightarrow 0$, since we would be better off using the trivial bound for $\rho < 1/T$, but the singularity is integrable.
We obtain
\begin{align*}
\int_{B_r(z)} K(x,x')^2 dx' &\lesssim (T^{n-1}\eta)^2 \int_0^{2r} \left( (T\rho)^{-(n-1)/2} + \eta^{-1} \right)^2 \rho^{n-1} d\rho \\
&\lesssim T^{2n-2} \eta^2 r^n \left( (rT)^{-(n-1)} + \eta^{-1} (rT)^{-(n-1)/2} + \eta^{-2} \right)
\end{align*}
Integrating over $x$ and noting that $\vol(B_r) \asymp r^n$, we obtain
\[
\int_B \int_B K(x,x')^2 dx' dx \lesssim \vol(B)^2 \big( T^{n-1} \eta \big)^2 \left( (rT)^{-(n-1)} + \eta^{-1} (rT)^{-(n-1)/2} + \eta^{-2} \right)
\]
as claimed in Lemma ~\ref{lem:variance-bound}.
This improves on what one would get by replacing $K$ with its maximum, namely
\[
\int_B \int_B K(x,x')^2 dx' dx \lesssim \vol(B)^2 \big( T^{n-1} \eta \big)^2
\]
Recall that we have normalized to have Gaussian coefficients of variance proportional to $T^{n-1} \eta$. Thus this factor $(T^{n-1}\eta)^2$ will cancel, leaving
\[
\var\left[ \frac{1}{\vol(B)} \int_B | \phi |^2 \right] = \frac{\sigma^4}{\vol(B)^2} \int_B \int_B K^2 \lesssim (rT)^{-(n-1)} + \eta^{-1} (rT)^{-(n-1)/2} + \eta^{-2}
\]
This vanishes as $rT \rightarrow \infty$ and $\eta \rightarrow \infty$, whereas the trivial bound would only show the variance is bounded.

\section{Collecting the bounds and proving Theorem~\ref{thm:main}} \label{sec:collect}

From the union bound, we had
\begin{align*}
\prob\left( \exists z \ |X_z - \E[X_z] | > \varepsilon \right) &\lesssim \prob ( \| \phi \|_{\infty} > c_1 \sqrt{\varepsilon rT} ) + T^n \prob( |X_z - \E[X_z ] | > \varepsilon /3 ) \\
&\lesssim T^n \left(\exp(-c_3 \varepsilon rT) + \prob( |X_z - \E[X_z ] | > \varepsilon /3 ) \right)
\end{align*}
From the Chernoff bound,
\begin{equation}
\prob( |X_z - \E[X_z ] | > \varepsilon /3 ) \lesssim \exp\left(-c_4 \varepsilon \left( \sum \lambda_j^2 \right)^{-1/2} \right)
\end{equation}
From the variance formula,
\begin{align*}
\sum \lambda_j^2 &\lesssim (rT)^{-(n-1)} + \eta^{-1} (rT)^{-(n-1)/2} + \eta^{-2} \\
&\lesssim \left( (rT)^{-(n-1)/2} + \eta^{-1} \right)^2
\end{align*}
Therefore
\begin{equation}
\prob( |X_z - \E[X_z ] | > \varepsilon /3 ) \lesssim \exp\left( -c_5 \varepsilon \left( (rT)^{-(n-1)/2} + \eta^{-1} \right)^{-1} \right)
\end{equation}
We already assumed $rT/\log(T) \rightarrow \infty$ so that $T^n \exp(-c_3 \varepsilon rT) \rightarrow 0$ no matter how small is the given $\varepsilon$, which controls the probability of an ``off-grid" deviation. To control the ``on-grid" deviation, we must further assume that
\[
\left(  \big( rT)^{-(n-1)/2} + \eta^{-1} \right)^{-1} /\log(T) \rightarrow \infty.
\]
This guarantees that, again, the factor of $T^n$ can be absorbed. Equivalently, we need 
\[
\left( (rT)^{-(n-1)/2} + \eta^{-1} \right)\log(T) \rightarrow 0,
\]
that is, both $(rT)^{-(n-1)/2} \log(T) \rightarrow 0$ and $\eta^{-1}\log(T) \rightarrow 0$. For $n \geq 3$, the first of these is already implied by the assumption $rT/\log(T) \rightarrow \infty$. 
If $n = 2$, then we instead assume $(rT)/\log(T)^2 \rightarrow \infty$. 
Thus the requirements amount to both $rT$ and $\eta(T)$ being asymptotically larger than $\log(T)$:
\begin{align*}
\frac{rT}{\log(T)} &\rightarrow \infty, \quad (\text{or} \ rT/\log(T)^2 \rightarrow \infty \ \text{if} \ n=2) \\
\frac{\eta(T)}{\log(T)} &\rightarrow \infty
\end{align*}
These are the hypotheses of Theorem~\ref{thm:main}, and the proof is complete. 
Moreover, we have proved the rate of convergence for Theorem~\ref{thm:main} claimed in (\ref{eqn:main-precise}): for any $\varepsilon > 0$, there are positive $C_{\varepsilon}$ and $c(\varepsilon)$ such that
\begin{equation}
\prob(\exists z |X_z - \E[X_z]| > \varepsilon) \leq C_{\varepsilon} T^n \exp\left(-c(\varepsilon) \big( (rT)^{-(n-1)/2} + \eta^{-1}\big)^{-1} \right).
\end{equation}

\section{Conclusion} \label{sec:conc}

The proof we have given relies on a union bound, ignoring the interesting question of how integrals $\int_B |\phi|^2$ and $\int_{B'} |\phi|^2$ over different sets are correlated. One might also wonder about other ensembles of random functions, for instance band-limited functions with a window $\eta(T)$ proportional to $T$ instead of $o(T)$, or where the distribution of the coefficients is not Gaussian. One could study other sets $B$, not necessarily balls, either with diameter shrinking like the $r$ in our setup, or volume shrinking like $r^n$. The lifts of $|\phi|^2 d\vol$ to $S^* M$ are another interesting class of random measures. Regarding more general coefficients, we note the article \cite{HW} of Hanson-Wright on concentration for quadratic forms in independent random variables.

As a first step addressing two of these further directions, here is an exact covariance formula. The covariance between two of our integrals takes a similar form to the variance of a single one. In \cite{dci}, we did this calculation on the sphere. This was an algebraic calculation valid in more general circumstances, as we now indicate. This proof applies to non-Gaussian distributions of the coefficients, as long as the first four moments are the same as for a Gaussian, whereas the proof by differentiating the moment generating function is specific to Gaussians.
Without the assumption on the fourth moment, there is a more complicated formula involving $\sum_j \phi_j(x)^2\phi_j(y)^2$ in addition to the kernel $\sum_j \phi_j(x) \phi_j(y)$.

\begin{lemma} \label{lem:variance}
Suppose $c_j$ are independent random variables with first and third moments $0$, variance $\sigma^2$, and fourth moment $3\sigma^4$. Suppose $\phi_j : M \rightarrow \C$ are functions on some measure space $M$ (assumed $\sigma$-finite for purposes of Fubini's theorem) and $\phi = \sum_j c_j \phi_j$ is the corresponding random function. Then for any measurable subsets $B \subseteq M$, $B' \subseteq M$,
\begin{equation} \label{eq:variance-formula}
{\rm cov} \left[\int_B | \phi |^2  , \int_{B'} |\phi |^2 \right] = 2\sigma^4 \int_{B} \int_{B'} K(x,x')^2 dx dx'
\end{equation}
where $K(x,x') = \sum_j \phi_j(x)\overline{\phi_j(x')}$.
If the fourth moment $\E[c^4]$ does not necessarily equal $3\sigma^4$, then the covariance is given by
\begin{align*}
{\rm cov}\left[\int_B |\phi|^2, \int_{B'} |\phi|^2 \right] = &2\sigma^4 \int_B \int_{B'} K(x,x')^2 dxdx' \\
&+ \big(\E[c^4] - 3\sigma^4 \big) \int_B \int_{B'} \sum_j \phi_j(x)^2 \phi_j(x')^2 dxdx'.
\end{align*}
\end{lemma}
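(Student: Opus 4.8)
The plan is to expand everything directly in terms of the coefficients and reduce to a combinatorial count over which indices coincide. Writing $\phi = \sum_j c_j \phi_j$, we have $|\phi(x)|^2 = \sum_{j,k} c_j c_k \phi_j(x)\overline{\phi_k(x)}$ (assuming real coefficients $c_j$, as the paper does throughout). Hence
\[
\int_B |\phi|^2 \int_{B'} |\phi|^2 = \sum_{j,k,\ell,m} c_j c_k c_\ell c_m \left(\int_B \phi_j \overline{\phi_k}\right)\left(\int_{B'} \phi_\ell \overline{\phi_m}\right),
\]
and similarly $\E\!\left[\int_B|\phi|^2\right] = \sigma^2 \sum_j \int_B |\phi_j|^2 = \sigma^2 \int_B K(x,x)\,dx$. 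First I would take expectations term by term: $\E[c_j c_k c_\ell c_m]$ vanishes unless the indices pair up, because the odd moments vanish; by independence it equals $\sigma^4$ when the four indices split into two distinct pairs, and $\E[c^4]$ when all four are equal. There are three pairing patterns: $(j{=}k,\ \ell{=}m)$, $(j{=}\ell,\ k{=}m)$, and $(j{=}m,\ k{=}\ell)$, and the all-equal case has been triple-counted by these, contributing a correction $\E[c^4] - 3\sigma^4$ on the diagonal $j=k=\ell=m$. This is the bookkeeping that produces the two stated formulas at once: the three patterns give, respectively, $\sigma^4 \big(\int_B K(x,x)dx\big)\big(\int_{B'} K(x',x')dx'\big)$ and two copies of $\sigma^4 \int_B\int_{B'} |K(x,x')|^2 dxdx'$ (using $\int_B \phi_j\overline{\phi_k}\,\overline{\int_{B'}\phi_j\overline{\phi_k}} = \int_B\int_{B'}\phi_j(x)\overline{\phi_k(x)}\overline{\phi_j(x')}\phi_k(x')\,dxdx'$ and summing over $j,k$), and the all-equal correction gives $(\E[c^4]-3\sigma^4)\int_B\int_{B'}\sum_j \phi_j(x)^2\phi_j(x')^2$.

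Next I would subtract $\E\!\left[\int_B|\phi|^2\right]\E\!\left[\int_{B'}|\phi|^2\right] = \sigma^4\big(\int_B K(x,x)dx\big)\big(\int_{B'}K(x',x')dx'\big)$, which is exactly the contribution of the first pairing pattern; it cancels, leaving precisely the $2\sigma^4\int_B\int_{B'}|K(x,x')|^2$ term plus the fourth-moment correction. When $\E[c^4] = 3\sigma^4$ the correction drops out and we recover \eqref{eq:variance-formula}. Throughout, interchanging sum and integral is justified by Fubini (the measure space is $\sigma$-finite by hypothesis) together with $L^2$ membership of the $\phi_j$; in the setting of the monochromatic ensemble the index sets are finite so no convergence issue arises at all, and I would remark that the formula holds verbatim with $K(x,x')^2$ read as $|K(x,x')|^2$ when the $\phi_j$ are complex-valued (using the conjugate in the definition of $K$), which accounts for the absolute values written in the statement.

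The only genuinely delicate point is the combinatorics of the index pairings in the non-Gaussian case — specifically keeping track of the overcounting on the diagonal so that the coefficient of $\int\int \sum_j \phi_j^2\phi_j'^2$ comes out as $\E[c^4]-3\sigma^4$ and not something else. Everything else is routine: the odd moments kill the unpaired terms, independence factorizes the paired ones, and the two ``off-diagonal'' pairings are visibly identical after relabelling, giving the factor $2$. I do not anticipate any analytic obstacle; this is an algebraic identity, which is why, as the paper notes, it applies to any coefficient distribution matching a Gaussian in its first four moments, in contrast to the moment-generating-function argument used for the variance bound in Section~\ref{sec:chernoff}.
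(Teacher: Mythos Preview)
Your proposal is correct and follows essentially the same route as the paper's own proof: expand the fourfold sum, evaluate $\E[c_i c_j c_k c_l]$ via independence and the vanishing odd moments, sort into the three pairing patterns with the diagonal correction $\E[c^4]-3\sigma^4$, and observe that the $(j{=}k,\ell{=}m)$ pattern is exactly cancelled by the product of expectations. The only caveat is your remark about the complex case: the two off-diagonal pairings do \emph{not} both yield $|K(x,x')|^2$ when the $\phi_j$ are genuinely complex (one gives $\big(\sum_j \phi_j(x)\phi_j(x')\big)\overline{\big(\sum_k \phi_k(x)\phi_k(x')\big)}$ instead), so the clean formula with coefficient $2$ really uses the reality of the $\phi_j$, just as in the paper's computation.
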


\begin{proof}
We compute the covariance $\E[ \int_B |\phi |^2 \int_{B'} |\phi|^2] - \E[\int_B |\phi |^2 ]  \E[ \int_{B'} |\phi |^2]$ by expanding $| \phi |^2$ and using linearity of expectation to exchange $\E$ with the sums and integrals. For the expectation of the product, we have
\begin{equation*}
\E \left[\int_B \phi^2 \int_{B'} \phi^2 \right] = \int_B \int_{B'} \sum_i \sum_j \sum_k \sum_l \phi_i(x)\overline{\phi_j(x)} \phi_k(x') \overline{\phi_l(x')} \E[c_i c_j c_k c_l] dx dx'.
\end{equation*}
Since the coefficients are independent and have mean 0, the expectation $\E[c_i c_j c_k c_l]$ is $3\sigma^4$ if all indices $i, j, k,$ and $l$ are equal, $\sigma^4$ if they are equal in pairs, and $0$ in all other cases. In light of the different cases $i=j \neq k = l$, $i = k \neq j = l$, or $i = l \neq j = k$, it follows that
\begin{align*}
&\E \left[\int_B |\phi |^2 \int_{B'}| \phi |^2 \right] \\
&= \sigma^4 \left(3 \sum_i |\phi_i(x) |^2 |\phi_i(x')|^2 + \sum_{i \neq k} |\phi_i(x)|^2 |\phi_k(x')|^2 + 2 \sum_{i \neq j} \phi_i(x) \overline{\phi_i(x')} \phi_j(x')\overline{\phi_j(x')} \right)
\end{align*}
The factor of 3 means that the first term exactly supplies the missing diagonal terms $i=k$, $i=j$, and $i=l$ (which we have merged with $i=k$, the two cases giving the same contribution) in the three other sums. The completed sums then factor, so that
\begin{align*}
\E\left[  |\phi(x)|^2  |\phi(x')|^2 \right] &= \sigma^4 \left( \sum_i |\phi_i(x)|^2 \sum_k |\phi_k(x')|^2 + 2\sum_i \phi_i(x) \overline{\phi_i(x')} \sum_j \phi_j(x) \overline{\phi_j(x')} \right) \\
&= \sigma^4 \left( K(x,x)K(x',x') + 2K(x,x')^2 \right)
\end{align*}
For the product of the expectations, we have
\begin{equation*}
\E\left[ \int_B |\phi|^2 \right] = \sum_i \sum_j \E[c_i c_j] \int_B \phi_i \overline{\phi_j} = \sigma^2 \int_B K(x,x) dx
\end{equation*}
by independence of the coefficients. Thus subtraction gives
\begin{align*}
&{\rm cov} \left[\int_B | \phi |^2  , \int_{B'} |\phi |^2 \right]  = \E\left[ \int_B |\phi |^2 \int_{B'} |\phi|^2 \right] - \E\left[\int_B |\phi |^2 \right]  \E\left[ \int_{B'} |\phi |^2\right] \\
&= \sigma^4 \int_B \int_{B'} \left( K(x,x)K(x',x') + 2K(x,x')^2 \right) - \sigma^4 \int_B \int_{B'} K(x,x)K(x',x') dxdx' \\
&= 2 \sigma^4 \int_B \int_{B'} K(x,x')^2 dx dx'
\end{align*}
which is (\ref{eq:variance-formula}).

If the fourth moment $\E[c^4]$ does not match that of a Gaussian, then the same method shows that the covariance is given by
\begin{align*}
{\rm cov}\left[\int_B |\phi|^2, \int_{B'} |\phi|^2 \right] = &2\sigma^4 \int_B \int_{B'} K(x,x')^2 dxdx' \\
&+ \big(\E[c^4] - 3\sigma^4 \big) \int_B \int_{B'} \sum_j \phi_j(x)^2 \phi_j(x')^2 dxdx'.
\end{align*}
\end{proof}
Note that, whereas $\sum_j \phi_j(x)\phi_j(x')$ is unaffected by an orthogonal change of basis $\phi_j \mapsto \sum_k a_{jk}\phi_k$, the sum of squares $\sum_j \phi_j(x)^2\phi_j(x')^2$ may depend on the choice of orthonormal basis. If $\E[c^4] = 3\sigma^4$, then this extra term disappears. 

\section*{Acknowledgments}

We thank Peter Sarnak for his advice, encouragement, and support over the course of this work. We thank Yaiza Canzani for helpful discussions about Weyl's law. We thank the Natural Sciences and Engineering Research Council of Canada for its support through a PGS D grant.

\end{document}